\documentclass{elsarticle}
\usepackage{amssymb}
\usepackage{bm} 
\usepackage{graphicx}
\usepackage{graphics}
\usepackage{caption2}
\usepackage{psfrag}
\usepackage{amsmath}
\usepackage{amscd}
\usepackage{amsfonts}
\usepackage{float}
\usepackage{latexsym}
\usepackage{lscape}
\usepackage{mathrsfs}

\newcommand{\ds}{\displaystyle}
\newcommand{\f}{\frac}

\usepackage{ntheorem}

\newtheorem{theorem}{Theorem}[section]

\newtheorem{lemma}[theorem]{Lemma}
\newtheorem{remark}[theorem]{Remark}
\newtheorem*{proof}[theorem]{Proof}

\journal{ }
\begin{document}

\begin{frontmatter}

\title{Numerical approximation for a nonlinear variable-order fractional differential equation via an integral equation method}

\author[1]{Xiangcheng Zheng}
\ead{zhengxch@math.pku.edu.cn}

\address[1]{School of Mathematical Sciences, Peking University, Beijing 100871, China}

\begin{abstract}
We study a numerical approximation for a nonlinear variable-order fractional differential equation via an integral equation method. Due to the lack of the monotonicity of the discretization coefficients of the variable-order fractional derivative in standard approximation schemes, existing numerical analysis techniques do not apply directly. By an approximate inversion technique, the proposed model is transformed as a second kind Volterra integral equation, based on which a collocation method under uniform or graded mesh is developed and analyzed. In particular, the error estimates improve the existing results by proving a consistent and sharper mesh grading parameter and characterizing the convergence rates in terms of the initial value of the variable order, which demonstrates its critical role in determining the smoothness of the solutions and thus the numerical accuracy.
\end{abstract}

\begin{keyword}
Nonlinear fractional differential equation, variable-order, collocation method, graded mesh, error estimate, integral equation
\end{keyword}
\end{frontmatter}

\section{Introduction}
Variable-order fractional differential equations, in which the order of the fractional operator may be a function of time, attract growing attentions in the past few decades \cite{Gar,LorHar,Sam13,WanZheJMAA,ZenZhaKar,Zha,ZhaoSun,ZhuLiu}. Nevertheless, the corresponding mathematical and numerical analysis are far from well developed, and even the results for the following simple nonlinear variable-order fractional Cauchy problem
\begin{equation}\label{ModelR}\begin{array}{c}
{}_0D_t^{\alpha(t)} u = f(u,t),~~t \in (0,T];~~u(0)=u_0
\end{array}\end{equation}
are rarely available in the literature.
Here the variable-order fractional integral operator ${}_0I_t^{1-\alpha(t)}$ and the Caputo variable-order fractional differential operator ${}_0D_t^{\alpha(t)}$ are defined by \cite{LorHar,Samko,SunFCAA}
\begin{equation}\label{IntOpr}
\ds {}_0I_t^{1-\alpha(t)}g(t):=\int_0^t\frac{1}{\Gamma(1-\alpha(s))}\frac{g(s)}{(t-s)^{\alpha(s)}}ds, ~
{}_0D_t^{\alpha(t)}g(t)={}_0I_t^{1-\alpha(t)}g'(t).
\end{equation}
The main difficulty of analyzing this problem lies in finding its equivalent second kind Volterra integral equations, which are only available for the constant-order case $\alpha(t)\equiv \alpha$ \cite{DieFor,LiYiJCP,Luc,SakYam} and for some variable-order fractional models that are naturally second kind Volterra integral equations \cite{WanZheJMAA,ZheSINUM2}. In a recent work an approximate inversion technique was proposed \cite{ZheArxiv}, which provides a potential means to convert variable-order fractional problems into equivalent second kind integral equations that significantly facilitates the analysis. Motivated by this work, we aim to present a mathematical and numerical study for model (\ref{ModelR}) based on its equivalent integral equation. 

In this work we follow the approximate inversion technique and the analysis in \cite{ZheArxiv} to convert model (\ref{ModelR}) to an equivalent second kind Volterra integral equation, based on which we prove the well-posedness and smoothing properties of the Caputo variable-order fractional Cauchy problem (\ref{ModelR}). The derived results serve as a compensation for \cite{ZheArxiv}, which focuses on the Riemann-Liouville analogue of (\ref{ModelR}).

After analyzing model (\ref{ModelR}), we intend to develop a collocation method for the equivalent integral equation. The main advantages of this method as well as our contributions are summarized as follows:
\begin{itemize}
\item[$\bullet$] It is known that the commonly-used approximation methods for the Caputo fractional derivative like the L1 scheme \cite{LinXu,SunWu} may generate non-monotonic discretization coefficients due to the impact of variable fractional order \cite{ZheSINUM2}. Therefore, traditional numerical analysis techniques do not apply. The proposed integral-equation-based collocation method could circumvent this monotonicity issue and thus significantly facilitates the error estimates. 

\item[$\bullet$] We find from the equivalent integral equation that if the variable order is linear and the partition is uniform, which is a commonly encountered case in real applications \cite{SunExp}, then the discretization coefficients of the collocation method has the translation invariant property that could be employed in numerical implementations to reduce the computational costs and memory requirements. In general, traditional approximation methods do not enjoy this benefit even in this special case, which demonstrates the advantage of the proposed integral equation method.

\item[$\bullet$] In \cite[Theorem 8.2]{ZheSINUM}, there are similar estimates for the truncation error of the proposed method in this work under uniform or graded mesh. However, (\textbf{i}) the mesh grading parameter $r$ for the case of singular solutions in \cite[Theorem 8.2]{ZheSINUM} does not converge to that for the case of smooth solutions when the singularity gradually vanishes, which implies that the choice of $r$ for the singular case is not sharp and consistent. Furthermore, (\textbf{ii}) the convergence orders depend on the bound of the variable order in \cite{ZheSINUM}, which does not reflect the critical role of $\alpha(0)$ in determining the smoothness of the solutions (cf. Theorems \ref{thm:u}--\ref{thm:u''}) and thus the convergence rates. In this work we improve the existing results by providing a sharper and consistent mesh grading $r$ for the case of singular solutions and characterizing the convergence orders in terms of the initial value $\alpha(0)$ of $\alpha(t)$. These improvements will be justified by numerical experiments.
\end{itemize}

The rest of the paper is organized as follows: In \S 2 we analyze the wellposedness and smoothing properties of the proposed model. in \S 3 we develop a collocation method for the equivalent integral equation and in \S 4 we prove its optimal-order convergence estimates under uniform or graded mesh. Some numerical experiments are carried out in \S 5 to substantiate the mathematical and numerical analysis. 
\section{Model and analysis} 
In this work we consider the Cauchy problem of a nonlinear variable-order fractional differential equation (\ref{ModelR}).
Let $C^m[0,T]$ and $C^m(0,T]$ for $0\leq m\in\mathbb N$ be the spaces of $m$-th continuously differentiable functions on $[0,T]$ and $[\varepsilon,T]$ for any $0<\varepsilon\ll 1$, respectively, equipped with standard norms \cite{AdaFou}. We then make the following assumptions on model (\ref{ModelR}):

\paragraph{\textbf{Assumption A}} $ \alpha \in C^2[0,T]$ and $ 0<\alpha_*\leq \alpha(t)<1$ on $t\in (0,T]$ for some $0<\alpha_*<1$. $f$ is second order differentiable on $\mathbb R\times [0,T]$ with 
$$\sum_{i=1}^2|f_i|+\sum_{i,j=1}^2|f_{i,j}|\leq L$$
for some $L>0$ where $f_1$ and $f_2$ refer to the derivatives of $f$ with respect to the first and the second arguments, respectively, and $f_{i,j}:=(f_i)_j=(f_j)_i$. 

In this paper, We use $Q$ to denote a generic positive constant that may assume different values at different cases.

\subsection{Auxiliary inequalities}
We present two inequalities to be used subsequently.
\begin{lemma}(Discrete Gronwall inequality \cite{Bru})\label{thm:DGron}
Suppose the non-negative sequence $\{z_n\}_{n=1}^N$ satisfies
\begin{equation}\label{Gronwall:e1}
z_n\leq \frac{Q}{N^\beta}\sum_{i=1}^{n-1}\frac{z_i}{(n-i)^{1-\beta}}+y, \quad 1\leq n\leq N,~0 < \beta < 1,~~y\geq 0.
\end{equation}
Then $z_n $ is bounded  by 
$$z_n \leq y\,(1+E_{\beta,1}(Q\Gamma(\beta))),~~1\leq n\leq N.$$
Here $E_{p,q}(z)$ represents the Mittag-Leffler function defined by \cite{KilSri,Pod}
$$\ds E_{p,q}(z) := \sum_{k=0}^{\infty}\frac{z^k}{\Gamma(pk+q)}, \qquad z \in \mathbb{R}, ~~p \in \mathbb{R}^+, ~~q \in \mathbb{R}.$$
\end{lemma}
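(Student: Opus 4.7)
The plan is a Picard-type iteration in the spirit of the classical Henry--Gronwall argument, adapted to the discrete setting. Let $T$ denote the nonnegative linear operator acting on sequences by $(Tv)_n := \frac{Q}{N^\beta}\sum_{i=1}^{n-1}\frac{v_i}{(n-i)^{1-\beta}}$, let $T^0$ be the identity, and let $\mathbf{1}$ denote the constant sequence equal to $1$. The hypothesis \eqref{Gronwall:e1} reads $z_n \leq (Tz)_n + y$. Because $T$ has a nonnegative kernel and $z$ is nonnegative, substituting the hypothesis into itself $k$ times yields, by a trivial induction on $k$,
\begin{equation*}
z_n \leq (T^k z)_n + y\sum_{j=0}^{k-1}(T^j\mathbf{1})_n, \qquad k \geq 1.
\end{equation*}

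The core of the argument is the inductive estimate
\begin{equation*}
(T^j\mathbf{1})_n \leq \frac{(Q\Gamma(\beta))^j}{\Gamma(j\beta+1)}\left(\frac{n}{N}\right)^{j\beta}, \qquad j \geq 0.
\end{equation*}
For $j=1$, since $k\mapsto k^{\beta-1}$ is decreasing on $\mathbb{N}$, comparison with $\int_0^{n-1} x^{\beta-1}\,dx$ gives $(T\mathbf{1})_n \leq Q(n/N)^\beta/\beta$, which is exactly the claimed bound in view of $\Gamma(\beta+1)=\beta\Gamma(\beta)$. The inductive step reduces to a discrete Beta-integral bound of the form
\begin{equation*}
\frac{1}{N^\beta}\sum_{i=1}^{n-1}\frac{(i/N)^{(j-1)\beta}}{(n-i)^{1-\beta}} \leq (n/N)^{j\beta}\,B((j-1)\beta+1,\beta),
\end{equation*}
after which the identity $B(p,q)=\Gamma(p)\Gamma(q)/\Gamma(p+q)$ telescopes the constants into the desired Gamma ratio.

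Since $n/N\leq 1$, the above bound plus $(T^k z)_n \leq (\max_m z_m)(T^k\mathbf{1})_n$ and the super-exponential decay $(Q\Gamma(\beta))^k/\Gamma(k\beta+1)\to 0$ as $k\to\infty$ (Stirling) permit passage to the limit, which leaves exactly the Mittag-Leffler series $\sum_{j=0}^\infty (Q\Gamma(\beta))^j/\Gamma(j\beta+1)=E_{\beta,1}(Q\Gamma(\beta))$, delivering the bound $z_n\leq y\,E_{\beta,1}(Q\Gamma(\beta))$, which is at most the stated $y(1+E_{\beta,1}(Q\Gamma(\beta)))$. The main obstacle is establishing the discrete Beta inequality cleanly, since the summand $i^{(j-1)\beta}(n-i)^{\beta-1}$ is not monotone on $\{1,\dots,n-1\}$ and carries a weak endpoint singularity at $i=n$. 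The workaround I would use is to split the sum at $i\approx n/2$ and, on each half, replace the smooth factor by its extremum on that half while comparing the singular factor to its integral; repackaging the resulting constants using the Beta integral then recovers the Gamma ratio without introducing extra multiplicative slack that would violate the stated bound.
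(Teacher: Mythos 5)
The paper itself gives no proof of this lemma: it is quoted verbatim from Brunner's monograph, and the standard proof there (going back to Dixon and McKee) is precisely the Picard iteration you describe --- iterate $z \le Tz + y\mathbf{1}$, prove $(T^j\mathbf{1})_n \le (Q\Gamma(\beta))^j (n/N)^{j\beta}/\Gamma(j\beta+1)$ by induction via a discrete Beta estimate, and sum the Mittag--Leffler series. Your skeleton, the base case, and the passage to the limit are all correct (in fact no limit argument is needed: the kernel of $T$ is strictly lower triangular, so $T^k z = 0$ for $k \ge N$).

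The one step that would fail as written is your handling of the discrete Beta inequality. You name non-monotonicity of the summand as the main obstacle, but $i \mapsto i^{(j-1)\beta}(n-i)^{\beta-1}$ is in fact \emph{increasing} on $\{1,\dots,n-1\}$: the factor $i^{(j-1)\beta}$ is non-decreasing because $(j-1)\beta \ge 0$, and $(n-i)^{\beta-1}$ is increasing in $i$ because $\beta - 1 < 0$. Hence, writing $g(s) := s^{(j-1)\beta}(n-s)^{\beta-1}$, one has $g(i) \le \int_i^{i+1} g(s)\,ds$ termwise and
$$\sum_{i=1}^{n-1} i^{(j-1)\beta}(n-i)^{\beta-1} \le \int_0^n s^{(j-1)\beta}(n-s)^{\beta-1}\,ds = n^{j\beta}\,B((j-1)\beta+1,\beta),$$
which is exactly the constant the induction requires. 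By contrast, the split-at-$n/2$ workaround you propose replaces $B((j-1)\beta+1,\beta)$ --- which decays like $\Gamma(\beta)(j\beta)^{-\beta}$ as $j \to \infty$ --- by a quantity bounded below by roughly $2^{-\beta}/\beta$ uniformly in $j$. The product of these constants then grows geometrically rather than telescoping into $1/\Gamma(j\beta+1)$, the resulting series need not converge for large $Q$, and the stated bound $y(1+E_{\beta,1}(Q\Gamma(\beta)))$ is lost. Replace the workaround by the direct monotone term-by-integral comparison and the proof closes.
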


We finally prove a useful result that could be used in the future. 
\begin{lemma}\label{lemcont}
Let $g\in C(0,T]$ satisfy $|g|\leq Qt^{-\beta}$ for some $0<\beta<1$ and the Assumption A holds. Then ${}_0I^{1-\alpha(t)}_tg\in C(0,T]$.
\end{lemma}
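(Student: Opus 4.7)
The plan is to prove continuity at an arbitrary point $t_0\in(0,T]$ by first rescaling the integral to a fixed domain via the substitution $s=tu$ and then invoking the dominated convergence theorem. A preliminary observation from Assumption A is essential: since $\alpha\in C^2[0,T]$ with $\alpha(t)<1$ on the compact interval $[0,T]$, there exists $\alpha^*<1$ such that $\alpha(t)\leq\alpha^*$ uniformly, and $\Gamma(1-\alpha(s))$ is bounded above and below by positive constants. Combined with the hypothesis $|g(s)|\leq Qs^{-\beta}$ with $\beta<1$, these bounds also ensure that the defining integral is finite for every $t>0$.

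Fix $t_0\in(0,T]$ and restrict attention to $t\in[t_0/2,T]$. The substitution $s=tu$ rewrites the operator as
$$F(t)\;:=\;{}_0I_t^{1-\alpha(t)}g(t)\;=\;\int_0^1\frac{g(tu)\,t^{1-\alpha(tu)}}{\Gamma(1-\alpha(tu))\,(1-u)^{\alpha(tu)}}\,du,$$
so that the integration limits no longer depend on $t$. For each fixed $u\in(0,1)$, continuity of $g$ on $(0,T]$ and of $\alpha,\Gamma$ on $[0,T]$ forces the integrand to converge pointwise to its value at $t_0$ as $t\to t_0$.

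The main step, which is also the principal obstacle, is producing a $t$-independent, integrable majorant for the integrand uniformly in $t\in[t_0/2,T]$. Using $|g(tu)|\leq Q(tu)^{-\beta}\leq Q(t_0/2)^{-\beta}u^{-\beta}$, the uniform upper bound $t^{1-\alpha(tu)}\leq\max(1,T^{1-\alpha_*})$, a positive lower bound on $\Gamma(1-\alpha(tu))$, and the inequality $(1-u)^{-\alpha(tu)}\leq(1-u)^{-\alpha^*}$ valid on $u\in(0,1)$ because $1-u<1$ and $\alpha(tu)\leq\alpha^*$, the integrand is majorized by a constant multiple of $u^{-\beta}(1-u)^{-\alpha^*}$, which is integrable on $(0,1)$ precisely because $\beta<1$ and $\alpha^*<1$. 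The key subtlety here is the strict uniform bound $\alpha^*<1$, which is implicit rather than explicit in Assumption A; without it the $(1-u)^{-\alpha^*}$ factor would not be integrable. With the majorant in hand, the dominated convergence theorem yields $F(t)\to F(t_0)$, and hence ${}_0I_t^{1-\alpha(t)}g\in C(0,T]$.
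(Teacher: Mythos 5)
Your route is genuinely different from the paper's. The paper proves continuity on each $[\varepsilon,T]$ by writing the difference ${}_0I^{1-\alpha(t_2)}_{t_2}g(t_2)-{}_0I^{1-\alpha(t_1)}_{t_1}g(t_1)$ as a sum of three integrals (the new piece on $[t_1,t_2]$, the piece near the origin on $[0,\varepsilon/2]$, and the middle piece on $[\varepsilon/2,t_1]$) and estimates each directly, obtaining an explicit modulus of continuity of order $(t_2-t_1)^{1-\bar\alpha}$. Your rescaling $s=tu$ followed by dominated convergence is cleaner and avoids the term-by-term bookkeeping, at the cost of giving only qualitative continuity rather than a quantitative modulus; for the purposes of Remark \ref{remequ}, qualitative continuity is all that is used, so this is a legitimate trade-off.

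There is, however, one misstep in your justification of the majorant, precisely at the point you call the key subtlety. You assert that Assumption A together with compactness of $[0,T]$ yields a uniform $\alpha^*<1$ with $\alpha(t)\le\alpha^*$ on all of $[0,T]$. Assumption A only requires $\alpha(t)<1$ on the half-open interval $(0,T]$, and the paper explicitly allows $\alpha(0)=1$ (this case is treated in Theorems \ref{thm:u} and \ref{thm:u''}), so no such global $\alpha^*$ need exist. The same misreading makes your claim that $\Gamma(1-\alpha(s))$ is bounded \emph{above} false when $\alpha(0)=1$ (though only the lower bound $\Gamma(1-\alpha(s))\ge\Gamma(1-\alpha_*)>0$ is actually needed, since $\Gamma$ sits in the denominator). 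The gap is easily repaired: the factor $(1-u)^{-\alpha(tu)}$ needs an exponent strictly below $1$ only near $u=1$, where $tu\ge t_0/4$ is bounded away from $0$, so one may take $\alpha^*:=\max_{[t_0/4,T]}\alpha<1$ and bound $(1-u)^{-\alpha(tu)}\le(1-u)^{-\alpha^*}$ for $u\ge 1/2$, while for $u\le 1/2$ one simply uses $(1-u)^{-\alpha(tu)}\le 2$. The resulting majorant $Qu^{-\beta}\big(1+(1-u)^{-\alpha^*}\big)$ is integrable and the dominated convergence argument then goes through. With this local-versus-global correction made explicit, your proof is complete.
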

\begin{proof}
It suffices to prove that ${}_0I^{1-\alpha(t)}_tg\in C[\varepsilon,T]$ for any $0<\varepsilon\ll 1$. Let $\varepsilon\leq t_1<t_2\leq T$ be such that $t_2-t_1<1$. Then a direct calculation yields
\begin{equation}
\begin{array}{l}
\ds {}_0I^{1-\alpha(t_2)}_{t_2}g(t_2)-{}_0I^{1-\alpha(t_1)}_{t_1}g(t_1)=\int_{t_1}^{t_2}\frac{1}{\Gamma(1-\alpha(s))}\frac{g(s)ds}{(t_2-s)^{\alpha(s)}}\\[0.125in]
\ds\qquad+\int_0^{\varepsilon/2}\frac{g(s)}{\Gamma(1-\alpha(s))}\Big(\frac{1}{(t_2-s)^{\alpha(s)}}-\frac{1}{(t_1-s)^{\alpha(s)}}\Big)ds\\[0.125in]
\ds\qquad+\int^{t_1}_{\varepsilon/2}\frac{g(s)}{\Gamma(1-\alpha(s))}\Big(\frac{1}{(t_2-s)^{\alpha(s)}}-\frac{1}{(t_1-s)^{\alpha(s)}}\Big)ds:=\sum_{i=1}^3\bar J_i.
\end{array}
\end{equation}  
We intend to prove that this difference tends to $0$ as $t_2-t_1\rightarrow 0^+$.
We bound $\bar J_1$ by
$$|\bar J_1|\leq Q\varepsilon^{-\beta}\int_{t_1}^{t_2}\frac{1}{(t_2-s)^{\bar \alpha}}ds\leq Q\varepsilon^{-\beta}(t_2-t_1)^{1-\bar\alpha},~~\bar\alpha:=\max_{\varepsilon\leq t\leq T}\alpha(t)<1.$$
We apply the mean value theorem to bound $\bar J_2$ by
$$|\bar J_2|\leq Q\int_0^{\varepsilon/2}\frac{s^{-\beta}(t_2-t_1)}{(t_1-s)^{1+\alpha(s)}}ds\leq Q\varepsilon^{-\beta-\bar\alpha}(t_2-t_1).$$
We bound $\bar J_3$ by using the fact $(t_1-s)^{-\alpha(s)}>(t_2-s)^{-\alpha(s)}$ for any $\varepsilon/2\leq s\leq t_1$
$$\begin{array}{l}
\ds |\bar J_3|\leq Q\varepsilon^{-\beta}\int_{\varepsilon/2}^{t_1}(t_1-s)^{-\alpha(s)}-(t_2-s)^{-\alpha(s)}ds\\[0.15in]
\ds\qquad\qquad=Q\varepsilon^{-\beta}\int_{\varepsilon/2}^{t_1}(t_1-s)^{-\alpha(s)}\Big(1-\Big(\frac{t_1-s}{t_2-s}\Big)^{\alpha(s)}\Big)ds\\[0.15in]
\ds\qquad\qquad\leq Q\varepsilon^{-\beta}\int_{\varepsilon/2}^{t_1}(t_1-s)^{-\bar\alpha}\Big(1-\Big(\frac{t_1-s}{t_2-s}\Big)^{\bar\alpha}\Big)ds\\[0.15in]
\ds\qquad\qquad=Q\varepsilon^{-\beta}\int_{\varepsilon/2}^{t_1}(t_1-s)^{-\bar\alpha}-(t_2-s)^{-\bar\alpha}ds\\[0.15in]
\ds\qquad\qquad=\frac{Q\varepsilon^{-\beta}}{1-\bar\alpha}[(t_1-\varepsilon/2)^{1-\bar\alpha}-((t_2-\varepsilon/2)^{1-\bar\alpha}-(t_2-t_1)^{1-\bar\alpha})]\\[0.15in]
\ds\qquad\qquad\leq Q\varepsilon^{-\beta}(t_2-t_1)^{1-\bar\alpha}.
\end{array}$$
We incorporate the preceding estimates to finish the proof.
\end{proof}

\subsection{Analysis of model (\ref{ModelR})}
In a recent work the well-posedness and smoothing properties of the following variable-order Abel integral equation
$ {}_0I_t^{1-\alpha(t)}u(t)=f(t)$
and the corresponding Riemann-Liouville variable-order fractional differential equation
were proved via the approximate inversion technique \cite{ZheArxiv}, which reverts the variable-order fractional integral operator to the identify operator (or its multiple) added by a weak-singular integral operator. Based on that idea, we could also transform the nonlinear Caputo variable-order fractional Cauchy problem (\ref{ModelR}) to an equivalent integral equation and then perform the mathematical analysis similarly as \cite{ZheArxiv}. 

Following \cite[Theorem 2.1]{ZheArxiv}, we intend to show that the following variable-order fractional integral operator
$${}_0\tilde{I}_t^{\alpha(t)}g(t):=\int_0^t\frac{1}{\Gamma(\alpha(t))}\frac{g(s)}{(t-s)^{1-\alpha(t)}}ds $$
serves as the approximate inversion of ${}_0D_t^{\alpha(t)}$. To demonstrate this, for any $0<s<t$, we replace $t$ by $y$ in (\ref{ModelR}), multiply $1/\big(\Gamma(\alpha(t))(t-y)^{1-\alpha(t)}\big)$ on both sides of (\ref{ModelR}) and integrate the resulting equation from $0$ to $t$ to obtain
\begin{equation}\label{eq:e2}
\begin{array}{l}
\ds \frac{1}{\Gamma(\alpha(t))}\int_0^t\frac{1}{(t-y)^{1-\alpha(t)}}\int_0^y\frac{1}{\Gamma(1-\alpha(s))}\frac{u'(s)ds}{(y-s)^{\alpha(s)}}dy\\[0.125in]
\ds\quad\quad=\frac{1}{\Gamma(\alpha(t))}\int_0^t\frac{f(u(y),y)}{(t-y)^{1-\alpha(t)}}dy.
\end{array}
\end{equation}
We exchange the order of the double integrals on the left-hand side of (\ref{eq:e2}) and evaluate the interior integral analytically as follows
$$\int_s^t\frac{1}{(t-y)^{1-\alpha(t)}(y-s)^{\alpha(s)}}dy=\frac{\Gamma(\alpha(t))\Gamma(1-\alpha(s))}{\Gamma(1+\alpha(t)-\alpha(s))}(t-s)^{\alpha(t)-\alpha(s)}$$
to get
\begin{equation}\label{eq:e3}
\begin{array}{l}
\ds \int_0^tK(t,s)u'(s)ds=\frac{1}{\Gamma(\alpha(t))}\int_0^t\frac{f(u(y),y)}{(t-y)^{1-\alpha(t)}}dy,\\[0.15in]
\ds\qquad\qquad K(t,s):=\frac{(t-s)^{\alpha(t)-\alpha(s)}}{\Gamma(1+\alpha(t)-\alpha(s))}.
\end{array}
\end{equation}

Note that by Assumption A, the following properties of $(t-s)^{\alpha(t)-\alpha(s)}$ hold
\begin{equation}\label{eq:e2.5}
\begin{array}{l}
\ds (t-s)^{\alpha(t)-\alpha(s)}=e^{(\alpha(t)-\alpha(s))\ln(t-s)}=e^{\alpha'(\xi)(t-s)\ln(t-s)}\\[0.05in]\quad
\ds \in \big[e^{-Q_0\|\alpha\|_{C^1[0,T]}},e^{Q_0\|\alpha\|_{C^1[0,T]}}\big],~~Q_0:=\sup_{0\leq s<t\leq T}|(t-s)\ln(t-s)|,\\[0.15in]
\ds \lim_{s\rightarrow t^-}(t-s)^{\alpha(t)-\alpha(s)}=\lim_{s\rightarrow t^-}e^{(\alpha(t)-\alpha(s))\ln(t-s)}=1.
\end{array}
\end{equation}

Then an integration by parts for the left-hand side of (\ref{eq:e3}) yields a second kind Volterra integral equation
\begin{equation}\label{IntEqn}
\ds u(t)-\int_0^tK_s(t,s)u(s)ds-\frac{u_0t^{\alpha(t)-\alpha(0)}}{\Gamma(1+\alpha(t)-\alpha(0))}=\frac{1}{\Gamma(\alpha(t))}\int_0^t\frac{f(u(s),s)ds}{(t-s)^{1-\alpha(t)}}.
\end{equation}
The kernel $K_s(t,s)$ could be bounded by
\begin{equation}\label{WW:e1}
 \begin{array}{l}
 \ds\big| K_s(t,s)\big|\\[0.05in]
 \ds\quad=K(t,s)\Big|\frac{\alpha'(s)\Gamma'(1+\alpha(t)-\alpha(s))}{\Gamma(1+\alpha(t)-\alpha(s))}-\alpha'(s)\ln(t-s)-\frac{\alpha(t)-\alpha(s)}{t-s}\Big|\\[0.1in]
 \ds\quad\leq Q\big(1+|\ln(t-s)|\big)\leq \frac{Q}{(t-s)^\varepsilon},~~0<\varepsilon\ll 1.
 \end{array}
 \end{equation}
Based on this expression, we prove the well-posedness and smoothing properties of the variable-order fractional differential equation (\ref{ModelR}) in the following theorems. The proofs could be performed by similar techniques as those of \cite[Theorems 3.1 and 4.1]{ZheArxiv} and are thus omitted. 

\begin{theorem}\label{thm:u}
Suppose the Assumption A holds. Then for $\alpha(0)<1$, model (\ref{ModelR}) has a unique solution $u \in C[0,T]\cap C^1(0,T]$ with 
\begin{equation}\label{Wellpose1a}
\begin{array}{c}
\ds \| u \|_{C[0,T]}+\max_{t\in[0,T]}t^{1-\alpha(0)}|u'(t)| \le QM,\\[0.05in]
\ds M:= \big(|u_0|+\|f(0,\cdot)\|_{C[0,T]}\big). 
\end{array}
\end{equation}
Here $Q$ may depend on $L$, $\alpha_*$ and $\|\alpha\|_{C^2[0,T]}$.

For $\alpha(0)=1$, model (\ref{ModelR}) has a unique solution $u \in C^1[0,T]$ with
\begin{equation}\label{Wellpose1b}
\| u\|_{C^1[0,T]} \le Q M.
\end{equation}

\end{theorem}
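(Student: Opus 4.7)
The plan is to base all analysis on the equivalent second-kind Volterra integral equation (\ref{IntEqn}) rather than on (\ref{ModelR}) directly, mirroring \cite[Theorems 3.1 and 4.1]{ZheArxiv}. Introducing the forcing
\[
F(t) := \frac{u_0\, t^{\alpha(t)-\alpha(0)}}{\Gamma(1+\alpha(t)-\alpha(0))},
\]
which is bounded on $[0,T]$ by (\ref{eq:e2.5}), and the nonlinear operator
\[
\mathcal T[u](t) := F(t) + \int_0^t K_s(t,s)\,u(s)\,ds + \frac{1}{\Gamma(\alpha(t))}\int_0^t \frac{f(u(s),s)}{(t-s)^{1-\alpha(t)}}\,ds,
\]
I would rewrite (\ref{IntEqn}) as the fixed-point problem $u=\mathcal T[u]$. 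The bound $|K_s(t,s)|\le Q(t-s)^{-\varepsilon}$ from (\ref{WW:e1}) together with $1-\alpha(t)\le 1-\alpha_*<1$ ensure that both integral terms define standard weakly singular Volterra operators on $C[0,T]$.

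First I would prove existence and uniqueness in $C[0,T]$. The Lipschitz bound on $f$ from Assumption A combined with the weakly singular kernel estimates makes $\mathcal T$ a contraction on $C[0,T']$ for a sufficiently small $T'>0$; a standard continuation argument together with a continuous analogue of Lemma \ref{thm:DGron} then propagates the unique solution to $[0,T]$ and yields the sup-norm bound in (\ref{Wellpose1a}).

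For the weighted derivative estimate when $\alpha(0)<1$, I would return to the pre-integration form (\ref{eq:e3}),
\[
\int_0^t K(t,s)\,u'(s)\,ds = \frac{1}{\Gamma(\alpha(t))}\int_0^t \frac{f(u(s),s)}{(t-s)^{1-\alpha(t)}}\,ds.
\]
Since $K(t,s)\to 1$ as $s\to t^-$ by (\ref{eq:e2.5}), the left-hand side is a small perturbation of $u(t)-u_0$, and the approximate-inversion strategy of \cite{ZheArxiv} recasts this equation as $u'(t)=G(t)+(\mathcal K u')(t)$, where $\mathcal K$ is weakly singular and $G$ is the derivative of the right-hand side. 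Differentiating under the integral, using (\ref{eq:e2.5}) and the fact that $\partial_t(t-s)^{\alpha(t)-1}$ produces an extra $|\ln(t-s)|$ factor, gives the pointwise bound $|G(t)|\le QM\, t^{\alpha(0)-1}$ near the origin. A weighted Gronwall argument applied to $w(t):=t^{1-\alpha(0)}|u'(t)|$, with Lemma \ref{lemcont} providing continuity of $u'$ on $(0,T]$, then closes the estimate and yields (\ref{Wellpose1a}).

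For $\alpha(0)=1$ the weight $t^{\alpha(0)-1}=1$ is no longer singular, so $G\in C[0,T]$ and the fixed-point/Gronwall loop, now carried out in $C^1[0,T]$, produces (\ref{Wellpose1b}). The main obstacle will be rigorous differentiation under the integral sign when both the upper limit and the exponent $\alpha(t)$ depend on $t$: each chain-rule term (including the logarithmic kernel from $\partial_t(t-s)^{\alpha(t)-1}$) must be controlled by an integrable bound with weight no worse than $t^{\alpha(0)-1}$ so that the weighted Gronwall inequality closes, which is precisely where the regularity $\alpha\in C^2[0,T]$ and the uniform bounds from (\ref{eq:e2.5}) are invoked.
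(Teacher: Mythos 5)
Your proposal is correct and follows essentially the same route as the paper, which omits the proof of this theorem and simply invokes the techniques of \cite[Theorems 3.1 and 4.1]{ZheArxiv} applied to the equivalent second-kind VIE (\ref{IntEqn}): contraction/Gronwall on $C[0,T]$ using the kernel bound (\ref{WW:e1}) for existence, uniqueness and the sup-norm bound, then a differentiated VIE with forcing of size $t^{\alpha(0)-1}$ and a weighted Gronwall argument for the derivative estimate. The only point deserving extra care in your sketch is the differentiation of $\int_0^t K_s(t,s)u(s)\,ds$, since $K_s(t,s)$ has a logarithmic singularity as $s\to t^-$ so the boundary term must be handled by a suitable rearrangement, but this is a standard technicality within the cited framework.
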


\begin{theorem}\label{thm:u''}
	Suppose the Assumption A holds. Then if $\alpha(0)<1$, $u \in C[0,T]\cap C^2(0,T]$ with 
\begin{equation}\label{Wellpose2a}
\max_{t\in[0,T]}t^{2-\alpha(0)}|u''(t)| \le QM .
\end{equation}
Here $Q$ may depend on $L$, $\alpha_*$ and $\|\alpha\|_{C^2[0,T]}$ and $M$ is defined in (\ref{Wellpose1a}).

If $\alpha(0)=1$ and $\alpha'(0)=0$, $u \in C^2[0,T]$ with
\begin{equation}\label{Wellpose2b}
\| u\|_{C^2[0,T]} \le Q M.
\end{equation}
\end{theorem}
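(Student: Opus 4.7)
The plan is to differentiate the equivalent integral equation (\ref{IntEqn}) twice and close the resulting identity by a weakly singular Gronwall argument, using Theorem \ref{thm:u} as input for the bounds on $u$ and $u'$. Writing (\ref{IntEqn}) as $u(t)=F(t)+G(t)$ with
\begin{equation*}
F(t):=\frac{u_0 t^{\alpha(t)-\alpha(0)}}{\Gamma(1+\alpha(t)-\alpha(0))} +\frac{1}{\Gamma(\alpha(t))}\int_0^t \frac{f(u(s),s)}{(t-s)^{1-\alpha(t)}}\,ds, \quad G(t):=\int_0^t K_s(t,s)u(s)\,ds,
\end{equation*}
I would first remove the endpoint singularity of the integrands by the substitution $s=t\tau$, $\tau\in(0,1)$, which moves every $t$-dependence inside the integrand and legitimises differentiation under the integral. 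Two differentiations then yield an identity of the form
\begin{equation*}
u''(t)=\Psi(t)+\int_0^t \mathcal{K}(t,s)\,u''(s)\,ds,
\end{equation*}
which can be analysed kernel by kernel.

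For $\alpha(0)<1$ the dominant singular contribution to $\Psi$ comes from the second derivative of the initial-value carrier $u_0 t^{\alpha(t)-\alpha(0)}/\Gamma(1+\alpha(t)-\alpha(0))$, which, using $\alpha\in C^2$ and $\alpha(t)-\alpha(0)=O(t)$, is of order $t^{\alpha(0)-2}$ up to subdominant $\log t$ factors. The other pieces produce, after the substitution, integrable expressions in $\tau$ of the form $(1-\tau)^{\alpha(t)-1}$ or $(1-\tau)^{\alpha(t)-1}\log(1-\tau)$ multiplied by explicit $t$-powers and by derivatives of $K_s$ controlled in the spirit of (\ref{WW:e1}); combined with the Theorem \ref{thm:u} bound $|u'(s)|\le QMs^{\alpha(0)-1}$, these integrate to contributions no worse than $t^{\alpha(0)-2}$. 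Hence $|\Psi(t)|\le QMt^{\alpha(0)-2}$ and $|\mathcal{K}(t,s)|\le Q(t-s)^{-\varepsilon}$ for arbitrarily small $\varepsilon>0$, and a weighted weakly singular Gronwall inequality in the norm $\sup_{0<t\le T} t^{2-\alpha(0)}|\cdot|$ delivers (\ref{Wellpose2a}).

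For $\alpha(0)=1$ the singular factor $t^{\alpha(0)-2}$ collapses, and the extra hypothesis $\alpha'(0)=0$ is exactly what is needed to kill the otherwise divergent $\alpha'(t)\log t$ contributions that arise when differentiating $t^{\alpha(t)-1}$ and $(t-s)^{\alpha(t)-1}$: with $\alpha\in C^2$ and $\alpha'(0)=0$ one has $\alpha'(t)=O(t)$, so $\alpha'(t)\log t\to 0$ as $t\to 0^+$. The same Gronwall step then yields $u''\in C[0,T]$ together with (\ref{Wellpose2b}). The principal obstacle throughout is the combinatorial bookkeeping of the singular pieces generated by double differentiation of a variable-exponent Volterra kernel, and the verification that each of them either decays at worst like $t^{\alpha(0)-2}$ or enters the Gronwall step only through a weakly singular convolution kernel; once the bound (\ref{WW:e1}) and its first-derivative analogue for $K_{s,t}$ are secured, the argument parallels that used for the Riemann--Liouville counterpart in \cite{ZheArxiv}.
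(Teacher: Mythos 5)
Your proposal follows essentially the same route the paper intends: the paper omits this proof and defers to the techniques of \cite[Theorems 3.1 and 4.1]{ZheArxiv}, which are precisely the steps you describe --- pass to the equivalent VIE (\ref{IntEqn}), substitute $s=t\tau$ to differentiate under the integral twice, and close with a weighted weakly singular Gronwall inequality in the norm $\sup_t t^{2-\alpha(0)}|\cdot|$, with $\alpha'(0)=0$ serving exactly to cancel the $\alpha'(t)\ln t$ terms when $\alpha(0)=1$. One small inaccuracy that does not affect the bound: the sharp $t^{\alpha(0)-2}$ singularity of $\Psi$ arises from twice differentiating the factor $t^{\alpha(t)}$ produced by the $f$-integral (whose leading behaviour is $\alpha(t)(\alpha(t)-1)t^{\alpha(t)-2}$), whereas the second derivative of the initial-value carrier $u_0t^{\alpha(t)-\alpha(0)}/\Gamma(1+\alpha(t)-\alpha(0))$ is only $O(t^{-1}\ln t)$, which is subdominant for $\alpha(0)<1$.
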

\begin{remark}\label{remequ}
It is worth mentioning that, after showing the well-posedness and smoothing properties (\ref{Wellpose1a})--(\ref{Wellpose1b}) of the VIE (\ref{IntEqn}) by methods in \cite{ZheArxiv}, we rewrite (\ref{IntEqn}) back to its original form as
\begin{equation}\label{MH}
 {}_0\tilde{I}_t^{\alpha(t)}\big({}_0I_t^{1-\alpha(t)}u'(t)-f(u,t) \big)=0. 
 \end{equation}
By Lemma \ref{lemcont}, the term ${}_0I_t^{1-\alpha(t)}u'(t)$ and thus ${}_0I_t^{1-\alpha(t)}u'(t)-f(u,t)$ is continuous on $(0,T]$. Then we could prove by contradiction that (\ref{MH}) implies ${}_0I_t^{1-\alpha(t)}u'(t)-f(u,t)=0$, and thus prove the equivalence between the model (\ref{ModelR}) and the VIE (\ref{IntEqn}) in the solution space used in the above theorems. This also implies that it suffices to develop numerical methods for the VIE (\ref{IntEqn}) instead of the original problem (\ref{ModelR}).
\end{remark}
\section{A collocation method}
Based on the discussions in Remark \ref{remequ}, we present a collocation method for the second kind VIE (\ref{IntEqn}).
Let $0 =: t_0<t_1<\cdots< t_N :=T$ be a graded partition of $[0,T]$ with 
$$t_i = T\bigg(\frac{i}{N}\bigg)^r,~~0\leq i\leq N,~~r\geq 1,$$
 which reduces to a uniform partition for $r=1$. Applying mean-value theorem bounds $\tau_i := t_i - t_{i-1}$ by  
\begin{equation}\label{bnd:h}
\max_{1 \le i \le N} \tau_i = \max_{1 \le i \le N} T\f{i^r - (i-1)^r}{N^r}  \leq \max_{1 \le i \le N} \frac{rT i^{r-1}}{N^r} \le \f{rT}{N}.
\end{equation}
Let $X$ be the space of piecewise-linear functions with respect to the partition. For any function $g(x)$ on $[0,T]$, we define 
$$\| g \|_{\hat L^\infty} := \max_{0 \le i \le N} | g(t_i) |.$$ Then the collocation method for (\ref{ModelR}) states as follows: find $U(t)\in X$ such that
\begin{equation}\label{ApprV}
\begin{array}{l}
\ds U(t_n) =\int_0^{t_n}K_s(t_n,s)U(s)ds+\frac{1}{\Gamma(\alpha(t_n))}\int_0^{t_n}\frac{f(U(s),s)ds}{(t_n-s)^{1-\alpha(t_n)}}\\[0.15in]
\ds\qquad\qquad\qquad\qquad+\frac{u_0t_n^{\alpha(t_n)-\alpha(0)}}{\Gamma(1+\alpha(t_n)-\alpha(0))}, \quad 0\leq n\leq N. 
\end{array}
\end{equation}

\subsection{A special case: linear variable order and uniform partition}
In practical applications, the linear variable fractional order is often used in the model to fit the experimental data due to its simplicity \cite{SunExp}. The uniform partition is also commonly used in numerical methods. We will show that in the case of linear variable order and uniform partition with the mesh size $\tau$, the discretization coefficients of the first right-hand side term of (\ref{ApprV}) exhibit translation invariant property that could be used in numerical implementations to reduce the computational costs and memory requirements. In general, traditional discretization methods for variable-order problem (\ref{ModelR}) do not enjoy this benefit even in this special case, which demonstrates the advantage of the proposed integral equation method.

Suppose $\alpha(t)$ is a linear function of $t$, we observe from (\ref{WW:e1}) that $K_s(t,s)$ is a function of $t-s$. We then apply the exact formula of $U(t)$ on each subinterval $[t_{i-1},t_i]$ to write the first right-hand side term of (\ref{ApprV}) in details as follows
\begin{equation*}
\begin{array}{l}
\ds \int_0^{t_n}K_s(t_n,s)U(s)ds\\
\ds\quad=\sum_{i=1}^n\int_{t_{i-1}}^{t_i} K_s(t_n,s)\bigg(\frac{s-t_{i-1}}{\tau}U(t_i)+\frac{t_i-s}{\tau}U(t_{i-1})\bigg)ds\\[0.15in]
\ds\quad= \int_{t_{n-1}}^{t_n} K_s(t_n,s)\frac{s-t_{n-1}}{\tau}ds U(t_n)\\[0.15in]
\ds\qquad+\sum_{i=1}^{n-1}\bigg(\int_{t_{i}}^{t_{i+1}} K_s(t_n,s)\frac{t_{i+1}-s}{\tau}ds+\int_{t_{i-1}}^{t_i} K_s(t_n,s)\frac{s-t_{i-1}}{\tau}ds \bigg)U(t_i)\\[0.175in]
\ds\qquad+\int_{t_0}^{t_1} K_s(t_n,s)\frac{t_1-s}{\tau}ds\, u_0\\[0.05in]
\ds\quad=:h_{n,n}U(t_n)+\sum_{i=1}^{n-1}h_{n,i}U(t_i)+\int_{t_0}^{t_1} K_s(t_n,s)\frac{t_1-s}{\tau}ds\, u_0.
\end{array}
\end{equation*}
By variable substitution $s\rightarrow s+\tau$ we could verify that $h_{n,i}=h_{n+1,i+1}$ for all reasonable $i$ and $n$, which shows the translation invariant property of $\{h_{n,i}\}$.

\section{Error estimate}
We prove error estimates for the collocation scheme (\ref{ApprV}) in the following three cases:
\begin{itemize}
\item[] (I) $\alpha(0)=1$, $\alpha'(0)=0$ and $r=1$;
\item[] (II) $\alpha(0)<1$ and  $r=1/\alpha(0)$;
\item[] (III)  $\alpha(0)<1$ and $r=1$.
\end{itemize}
According to Theorems \ref{thm:u}--\ref{thm:u''}, case (I) implies the smooth solution and a uniform partition, and cases (II) and (III) imply the non-smooth solution and graded or uniform partition.
We first estimate the truncation error $\mathcal R_n$, which is defined by
\begin{equation}\label{rn0}
\ds \mathcal R_n:=\int_0^{t_n}\frac{|u(s)-\hat U(s)|}{(t_n-s)^{\varepsilon_0}}ds,~~1-\alpha_*<\varepsilon_0<1
\end{equation}
for case (I), and
\begin{equation}\label{rn}
\ds \mathcal R_n:=\int_0^{t_n}\frac{|u(s)-\hat U(s)|}{(t_n-s)^{1-\alpha(t_n)}}ds=\sum_{i=1}^n\int_{t_{i-1}}^{t_i}\frac{|u(s)-\hat U(s)|}{(t_n-s)^{1-\alpha(t_n)}}ds,
\end{equation}
for cases (II) and (III).
Here $\hat U\in X$ refers to the piecewise linear interpolation of $u$, i.e., $\hat U\in X$ satisfies $\hat U(t_n)=u(t_n)$ for $0\leq n\leq N$. The reason of such definitions will be shown later. It is worth mentioning that in \cite[Theorem 8.2]{ZheSINUM}, there are estimates for a similar term as $\mathcal R_n$ under uniform or graded mesh. However, those results could be further improved from two aspects:
\begin{itemize}
\item[$\bullet$] The mesh grading parameter $r$ for the case of singular solutions in \cite[Theorem 8.2]{ZheSINUM} does not converge to that for the case of smooth solutions when the singularity gradually vanishes, which implies that the choice of $r$ in the singular case is not sharp and consistent. Therefore, we re-estimate $\mathcal R_n$ in details in the following theorem to find a sharper and consistent mesh grading parameter $r$.

\item[$\bullet$] For the case of uniform partition and singular solutions in \cite[Theorem 8.2]{ZheSINUM}, the order of the estimate depends on the bound of $\alpha$ but not on its value at the starting point (the initial value $\alpha(0)$ in the current case). From Theorems \ref{thm:u} and \ref{thm:u''} we find that the singularity of the solutions is determined by the behavior of $\alpha$ at $t=0$. Therefore, it may be possible to improve the estimate of $\mathcal R_n$ by characterizing it via $\alpha(0)$.
\end{itemize}

We address these issues to estimate $\mathcal R_n$ in the following theorem.
\begin{theorem}\label{thm:rn}
Suppose the Assumption A holds. Then for cases (I) and (II)
the following estimate holds
\begin{equation*}
\max_{1\leq n\leq N}|\mathcal R_n|\leq Q MN^{-2}.
\end{equation*}	
Here $M$ is defined in (\ref{Wellpose1a}) and $Q$ may depend on $\alpha_*$, $\|\alpha\|_{C^2[0,1]}$ and $L$.

Otherwise, in case (III)
 a sub-optimal estimate holds
\begin{equation*}
\max_{1\leq n\leq N}|\mathcal R_n|\leq Q MN^{-2\alpha(0)}.
\end{equation*}
\end{theorem}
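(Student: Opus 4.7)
My plan is to split $\mathcal R_n = \sum_{i=1}^n \int_{t_{i-1}}^{t_i} |u(s) - \hat U(s)|\, w_n(s)\, ds$, where the weight is $w_n(s) = (t_n-s)^{-\varepsilon_0}$ in case (I) and $w_n(s) = (t_n-s)^{\alpha(t_n)-1}$ in cases (II) and (III), and to bound the local interpolation error $|u(s) - \hat U(s)|$ using the smoothness estimates of Theorems \ref{thm:u}--\ref{thm:u''}. For $i \ge 2$, the standard piecewise-linear interpolation bound $|u(s) - \hat U(s)| \le Q\tau_i^2 \max_{[t_{i-1},t_i]}|u''|$ applies directly. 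On the boundary subinterval $[0, t_1]$, where $u''$ blows up at the origin, I would instead use the identity
\begin{equation*}
u(s) - \hat U(s) = (1-s/t_1)\int_0^s u'(\xi)\,d\xi - (s/t_1)\int_s^{t_1}u'(\xi)\,d\xi
\end{equation*}
together with $|u'(\xi)| \le QM\xi^{\alpha(0)-1}$ from Theorem \ref{thm:u} to obtain $|u(s) - \hat U(s)| \le QMt_1^{\alpha(0)}$ on $[0,t_1]$.

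Case (I) is then immediate: Theorem \ref{thm:u''} yields $\|u''\|_{C[0,T]} \le QM$, so $|u-\hat U|\le QMN^{-2}$ uniformly by (\ref{bnd:h}), and the weight is integrable since $\varepsilon_0 < 1$. For cases (II) and (III), substituting $|u''(t)| \le QMt^{\alpha(0)-2}$ together with $t_i = T(i/N)^r$ and $\tau_i \le QTi^{r-1}N^{-r}$ yields, for $i \ge 2$,
\begin{equation*}
\tau_i^2\, t_{i-1}^{\alpha(0)-2} \le Q\, i^{r\alpha(0)-2}\, N^{-r\alpha(0)}.
\end{equation*}
In case (II), the choice $r = 1/\alpha(0)$ collapses this to $Qi^{-1}N^{-1}$, matching the boundary bound $QMt_1^{\alpha(0)} = QMT^{\alpha(0)}N^{-1}$. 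Multiplying by the exactly-integrated weight factor $\alpha(t_n)^{-1}[(t_n-t_{i-1})^{\alpha(t_n)} - (t_n-t_i)^{\alpha(t_n)}]$ and using $\alpha_* \le \alpha(t) < 1$, the total sums to $O(N^{-2})$. In case (III), $r = 1$ leaves the singularity at the origin unresolved, and the boundary contribution of order $QMt_1^{\alpha(0)+\alpha(t_n)}$ dominates; a Taylor expansion $\alpha(t_n) = \alpha(0) + O(t_n)$ then extracts the sub-optimal leading rate $N^{-2\alpha(0)}$.

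The main obstacle will be the delicate bookkeeping in case (II): the sum $\sum_{i=2}^n i^{-1}N^{-1}$ multiplied by the weight factor must collapse to a bounded $O(N^{-1})$ without accruing a logarithmic loss. I expect to split the range at $i \approx n/2$, bounding $(t_n - t_i)^{\alpha(t_n)-1}$ by $t_n^{\alpha(t_n)-1}$ in the far regime and exploiting the direct integrability of the singular weight in the near regime, invoking $\alpha(t) \ge \alpha_*$ to close the exponents; the contribution from $[0, t_1]$ requires a parallel check using the explicit weight integral over $[0, t_1]$. In case (III), the analogous bookkeeping must confirm that the boundary-driven $N^{-2\alpha(0)}$ is indeed the leading order, with all interior contributions strictly subdominant after absorbing bounded powers of $T$ and $\|\alpha'\|_{C[0,T]}$ into the constant $Q$.
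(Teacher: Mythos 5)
Your proposal follows essentially the same route as the paper's proof: the same subinterval decomposition of $\mathcal R_n$, the first-derivative (Theorem \ref{thm:u}) bound on the boundary cell $[0,t_1]$ yielding $QMt_1^{\alpha(0)}$, the second-derivative (Theorem \ref{thm:u''}) bound $\tau_i^2 t_{i-1}^{\alpha(0)-2}$ with the exactly integrated weight on interior cells, and the split at $i\approx \lceil n/2\rceil$ with telescoping in the near range and $(t_n-t_i)^{\alpha(t_n)-1}\le Qt_n^{\alpha(t_n)-1}$ in the far range to avoid the logarithmic loss. The only minor inaccuracy is the claim in case (III) that the interior contributions are \emph{strictly} subdominant to the boundary one (for small $n$ they are of the same order $N^{-2\alpha(0)}$), but this does not affect the validity of the stated bound.
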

\begin{remark}
We find that as $\alpha(0)$ tends to $1$, the mesh grading $r$ in (II) approaches $1$ and thus the mesh is close to the uniform partition. Therefore, the mesh grading $r$ in the case (II) is consistent with that in (I). We also notice that both the mesh grading $r$ and the orders of the estimates are determined by $\alpha(0)$, which again demonstrates the key role of the initial value of $\alpha(t)$. In conclusion, the proved results improve those in \cite[Theorem 8.2]{ZheSINUM} from two aspects as mentioned above that will be justified by numerical experiments.
\end{remark}
\begin{proof}
Let $G^{(1)}_i(s;t) := (t_i-t)/\tau_i$ for $s \in [t_{i-1},t]$ or $-(t-t_{i-1})/\tau_i$ for $s \in [t,t_i]$ and $G^{(2)}_i(s;t) := -(t_i-t)(s-t_{i-1})/\tau_i$ for $s \in [t_{i-1},t]$ or $-(t-t_{i-1})(t_i-s)/\tau_i$ for $s \in [t,t_i]$. It is known that the error of the linear interpolation could be expressed as
\begin{equation}\label{ErrorExp}
u(x)-\hat U(x) \Bigl |_{[t_{i-1},t_i]} \Bigr . = \int_{t_{i-1}}^{t_i} G_i^{(m)}(s;t) \f{d^m u(s)}{d^m s} ds, \quad 1 \le i \le N,~m = 1, 2. 
\end{equation}
For case (I), $u\in C^2[0,T]$ by Theorem \ref{thm:u''}. Then a standard interpolation estimate yields 
$$\ds |\mathcal R_n| \leq Q\|u\|_{C^2[0,T]}N^{-2}\int_0^{t_n}(t_n-s)^{-\varepsilon_0}ds \leq Q MN^{-2}.$$
	
For case (II), we use (\ref{eq:e2.5}), (\ref{Wellpose1a}),  (\ref{ErrorExp}) with $m=1$, $t_1 =T N^{-r}$ and $t_n-t_1\geq t_1$ for $n> 1$ as well as
$$ \frac{1}{(t_n-t_1)^{1-\alpha(t_n)}}\leq \frac{Q}{(t_n-t_1)^{1-\alpha(t_1)}}\leq \frac{Q}{t_1^{1-\alpha(t_1)}}=\frac{Q}{t_1^{1-\alpha(0)}}\frac{Q}{t_1^{\alpha(0)-\alpha(t_1)}}\leq \frac{Q}{t_1^{1-\alpha(0)}} $$
 to bound the integral on the first interval $[0,t_1]$ in (\ref{rn}) by
$$\begin{array}{l}
\ds \int_0^{t_1}\frac{|u(s)-\hat U (s)|}{(t_n-s)^{1-\alpha(t_n)}}ds \leq \int_0^{t_1}\frac{\int_0^{t_1} | u'(y) | dy}{(t_n-s)^{1-\alpha(t_n)}}ds \\[0.15in]
\quad \ds \le Q M \int_0^{t_1}\frac{\int_0^{t_1} y^{\alpha(0)-1} dy}{(t_n-s)^{1-\alpha(t_n)}}ds  \leq QMt_1^{\alpha(0)}\int_0^{t_1}\frac{1}{(t_n-s)^{1-\alpha(t_n)}}ds\\[0.15in]
\quad \ds \le\left\{
\begin{array}{l}
\ds Q M t_1^{\alpha(0)}t_1^{\alpha(1)}\leq Q M t_1^{2\alpha(0)},~~\qquad\qquad~~\, n=1,\\[0.075in]
\ds QMt_1^{\alpha(0)}\frac{t_1}{(t_n-t_1)^{1-\alpha(t_n)}}\leq QMt_1^{2\alpha(0)},~~n>1
\end{array}
\right.\\[0.25in]
\quad\ds
\leq
QMN^{-2r\alpha(0)}.
\end{array}$$	

We use (\ref{ErrorExp}) with $m=2$ and (\ref{Wellpose2a}) to bound the integral on $[t_{i-1},t_i]$ for $2 \le i \le n$ in (\ref{rn}) 
\begin{equation}\label{trun:e3}\begin{array}{l}
\ds  \int_{t_{i-1}}^{t_i}\frac{|u(s)-\hat U(s)|}{(t_n-s)^{1-\alpha(t_n)}}ds  \leq \tau_i 
\int_{t_{i-1}}^{t_i}\frac{\int_{t_{i-1}}^{t_i}|u''(y)|dy}{(t_n-s)^{1-\alpha(t_n)}}ds \\[0.15in]
\ds \qquad\qquad\leq Q M \tau_i \int_{t_{i-1}}^{t_i}\frac{\int_{t_{i-1}}^{t_i}y^{\alpha(0)-2} dy}{(t_n-s)^{1-\alpha(t_n)}}ds\\[0.15in]
\ds \qquad\qquad\leq QM\tau_i^2t_{i-1}^{\alpha(0)-2}\int_{t_{i-1}}^{t_i}(t_n-s)^{\alpha(t_n)-1}ds \\[0.15in]
\ds \qquad\qquad\qquad\le Q\tau_i^2t_{i-1}^{\alpha(0)-2} \big ((t_n-t_{i-1})^{\alpha(t_n)}-(t_n-t_i)^{\alpha(t_n)}\big).
\end{array}\end{equation} 
In the rest of the proof we will consider \begin{equation}\label{rran}
 r=1\text{ or } r=\frac{1}{\alpha(0)},
 \end{equation}
  which implies $r\alpha(0)\leq 1$.
We use (\ref{bnd:h}) and (\ref{rran}) to bound (\ref{trun:e3}) with $i=n$ by 	
$$\begin{array}{rl}
&\ds  \int_{t_{n-1}}^{t_n}\frac{|u(s)-\hat U(s)|}{(t_n-s)^{1-\alpha(t_n)}}ds  \\
&\quad\ds\leq QM \tau_n^{2+\alpha(t_n)}t_{n-1}^{\alpha(0)-2}\leq QM\frac{n^{(2+\alpha(t_n))(r-1)}}{N^{(2+\alpha(t_n))r}}\frac{(n-1)^{(\alpha(0)-2)r}}{N^{(\alpha(0)-2)r}}
\\[0.15in]
&\quad\ds= QM\frac{n^{r(\alpha(0)+\alpha(t_n))-(2+\alpha(t_n))}}{N^{r(\alpha(0)+\alpha(t_n))}}\leq QMt_n^{\alpha(0)+\alpha(t_n)}n^{-(2+\alpha(t_n))}\\[0.15in]
&\quad\ds\leq QMt_n^{2\alpha(0)}n^{-(2+\alpha(t_n))}\leq QM\frac{n^{2r\alpha(0)-2-\alpha(t_n)}}{N^{2r\alpha(0)}}\leq QM\frac{1}{N^{2r\alpha(0)}}.
\end{array}$$
	
We use (\ref{bnd:h}) and the facts that $t_i \ge 2^{-r} t_n$ for $\lceil n/2 \rceil \le i \le n$ and that $\tau_i$ is increasing to bound 
$$\begin{array}{l}
\ds  \int_{t_{\lceil n/2 \rceil}}^{t_{n-1}} \frac{|u(s)-\hat U(s)|}{(t_n-s)^{1-\alpha(t_n)}}ds \\[0.1in]
\ds \quad \leq QM\sum_{i=\lceil n/2\rceil+1 }^{n-1}\tau_i^2t_{i-1}^{\alpha(0)-2} \big ((t_n-t_{i-1})^{\alpha(t_n)}-(t_n-t_i)^{\alpha(t_n)}\big) \\[0.2in]
\ds \quad \leq QM t_{n}^{\alpha(0)-2}\tau_n^2(t_n-t_{\lceil n/2\rceil})^{\alpha(t_n)}\leq  QMt_{n}^{\alpha(0)+\alpha(t_n)-2}\tau_n^2\\[0.05in]
\ds \quad\leq  QMt_{n}^{2\alpha(0)-2}\tau_n^2 \leq QM \frac{n^{r(2\alpha(0)-2)}}{N^{r(2\alpha(0)-2)}}\frac{n^{2(r-1)}}{N^{2r}}\\[0.05in]
\ds \quad= QM\frac{n^{2r\alpha(0)-2}}{N^{2r\alpha(0)}}\leq \frac{QM}{N^{2r\alpha(0)}}.
\end{array}$$ 
We use (\ref{bnd:h}), (\ref{rran}), the mean value theorem and the fact that $(t_n - t_i)^{1-\alpha(t_n)} \le Q t_n^{1-\alpha(t_n)}$ for $1 \le i \le \lceil n/2 \rceil$ to bound 
\begin{equation}\label{mhmh} \begin{array}{l}
\ds  \int_{t_1}^{t_{\lceil n/2 \rceil}} \frac{|u(s)-\hat U(s)|}{(t_n-s)^{1-\alpha(t_n)}}ds \\[0.15in]
\ds\qquad\leq QM\sum^{\lceil n/2\rceil }_{i=2}\tau_i^2t_{i-1}^{\alpha(0)-2}(t_n-t_i)^{\alpha(t_n)-1}\tau_i \\[0.15in]
\qquad\ds \leq QMt_n^{\alpha(t_n)-1}\sum^{\lceil n/2\rceil }_{i=2}t_{i-1}^{\alpha(0)-2}\tau_i^3\\[0.15in]
\qquad\ds \leq QMt_n^{\alpha(0)-1}\sum^{\lceil n/2\rceil }_{i=2}\frac{(i-1)^{r(\alpha(0)-2)}}{N^{r(\alpha(0)-2)}}\frac{i^{3(r-1)}}{N^{3r}}\\[0.15in]
\ds \qquad\leq\frac{QMn^{r(\alpha(0)-1)}}{N^{2r\alpha(0)}}\sum^{\lceil n/2\rceil }_{i=2} i^{r(\alpha(0)+1)-3}.
\end{array}\end{equation}
If $r=1$, then the right-hand side of (\ref{mhmh}) is bounded by
$$\frac{QMn^{r(\alpha(0)-1)}}{N^{2r\alpha(0)}}\sum^{\lceil n/2\rceil }_{i=2} i^{\alpha(0)-2}\leq \frac{QMn^{r(\alpha(0)-1)}}{N^{2r\alpha(0)}}\leq \frac{QM}{N^{2r\alpha(0)}} .$$ 
If $r=1/\alpha(0)(>1)$, then the right-hand side of (\ref{mhmh}) is bounded by
$$\frac{QMn^{r(\alpha(0)-1)}}{N^{2r\alpha(0)}}\sum^{\lceil n/2\rceil }_{i=2} i^{r-2}\leq \frac{QMn^{r(\alpha(0)-1)+r-1}}{N^{2r\alpha(0)}}=\frac{QM}{N^{2r\alpha(0)}}. $$
We collect the preceding estimates to complete the proof of case (II). The last estimate of this theorem is also a consequence of the above derivations. 	
\end{proof}

\begin{theorem}\label{thm:v} 
Suppose the Assumption A holds.  
Then for $N$ sufficiently large, the optimal-order error estimate holds for scheme (\ref{ApprV}) for cases (I) and (II)
\begin{equation}\label{thmv:e0}
\| u- U\|_{\hat L^\infty} \leq QMN^{-2}.
\end{equation}	
Here $M$ is defined in (\ref{Wellpose1a}) and $Q=Q(\alpha_*,\|\alpha\|_{C^2[0,1]},L)$.

For case (III), a sub-optimal estimate holds
\begin{equation*}
\| u- U\|_{\hat L^\infty} \leq QMN^{-2\alpha(0)}.
\end{equation*}
\end{theorem}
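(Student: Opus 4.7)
The plan is to subtract the collocation equation (\ref{ApprV}) from the exact VIE (\ref{IntEqn}) evaluated at $t=t_n$ and set $e_n:=u(t_n)-U(t_n)$, which produces
\begin{equation*}
e_n = \int_0^{t_n}K_s(t_n,s)\bigl(u(s)-U(s)\bigr)ds + \frac{1}{\Gamma(\alpha(t_n))}\int_0^{t_n}\frac{f(u(s),s)-f(U(s),s)}{(t_n-s)^{1-\alpha(t_n)}}ds.
\end{equation*}
I would then decompose each integrand through the piecewise-linear interpolant $\hat U$ of $u$ as $u-U=(u-\hat U)+(\hat U-U)$. The first piece yields truncation contributions: the bound (\ref{WW:e1}) on $|K_s|$ together with the Lipschitz property of $f$ from Assumption A reduces these contributions to a constant multiple of $\mathcal R_n$, controlled by Theorem \ref{thm:rn}. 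The choice of $\mathcal R_n$ matches the worse of the two kernel singularities in each case, namely the $(t_n-s)^{-\varepsilon_0}$ weight in case (I) (from $K_s$, motivating (\ref{rn0})) and the $(t_n-s)^{-(1-\alpha(t_n))}$ weight in cases (II)/(III) (from the fractional integral, as in (\ref{rn})).

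The second piece is genuinely discrete: on $[t_{i-1},t_i]$ one has $\hat U(s)-U(s) = \frac{s-t_{i-1}}{\tau_i}e_i+\frac{t_i-s}{\tau_i}e_{i-1}$, so integrating against both kernels produces a finite sum $\sum_{i=0}^n \omega_{n,i}|e_i|$ with nonnegative weights. Using $\alpha(t_n)\ge\alpha_*$ to replace the $n$-dependent fractional exponent by the uniform one, the inequality takes the form
\begin{equation*}
|e_n| \le \omega_{n,n}|e_n| + \frac{Q}{N^{\alpha_*}}\sum_{i=1}^{n-1}\frac{|e_i|}{(n-i)^{1-\alpha_*}} + Q\mathcal R_n.
\end{equation*}
A direct local estimate over $[t_{n-1},t_n]$ shows $\omega_{n,n}\le Q N^{-\alpha_*}$, so for $N$ sufficiently large this diagonal term can be absorbed into the left-hand side. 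Applying Lemma \ref{thm:DGron} with $\beta=\alpha_*$ and $y=Q\max_{1\le m\le n}\mathcal R_m$ then yields $|e_n|\le Q\max_{1\le m\le n}\mathcal R_m$ uniformly in $n$, and Theorem \ref{thm:rn} delivers the advertised rates $QMN^{-2}$ in cases (I)/(II) and $QMN^{-2\alpha(0)}$ in case (III).

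The main technical obstacle I anticipate is consolidating the two distinct singular weights, $(t_n-s)^{-\varepsilon_0}$ from $K_s$ and $(t_n-s)^{-(1-\alpha(t_n))}$ from the fractional integral, into a single $(n-i)^{-(1-\alpha_*)}$-type bound compatible with the template of Lemma \ref{thm:DGron}; this requires choosing $\varepsilon_0$ close enough to $1$ and exploiting the uniform lower bound $\alpha_*$. A secondary delicate point is verifying that, after extracting a $\tau_i^{\alpha_*}$-scale factor to pay for the $N^{-\alpha_*}$ prefactor required by Lemma \ref{thm:DGron}, the residual coefficients remain bounded independently of $n$, so that the Mittag--Leffler prefactor from Lemma \ref{thm:DGron} does not hide any $n$-dependence.
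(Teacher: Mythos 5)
Your proposal follows essentially the same route as the paper: form the error equation by subtracting (\ref{ApprV}) from (\ref{IntEqn}), split $u-U$ through the interpolant $\hat U$ so that the interpolation part is controlled by $\mathcal R_n$ via Theorem \ref{thm:rn} and the nodal part yields a weakly singular discrete inequality, consolidate the two kernel weights exactly as in (\ref{WW:e1}) (into $(t_n-s)^{-\varepsilon_0}$ for case (I) and $(t_n-s)^{-(1-\alpha(t_n))}\le Q(t_n-s)^{-(1-\alpha_*)}$ for cases (II)/(III)), and close with the discrete Gronwall inequality of Lemma \ref{thm:DGron}. The only difference is that the paper delegates the final discretization-and-Gronwall step to \cite[Theorem 5.1]{ZheSINUM}, whereas you sketch it explicitly (including the absorption of the diagonal weight for $N$ large, which is indeed the source of the ``$N$ sufficiently large'' hypothesis); this is consistent with, not divergent from, the paper's argument.
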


\begin{proof} Let $E:=\hat U-U\in X$ such that $E(t_n)=u(t_n)-U(t_n)$. Subtracting (\ref{ApprV}) from (\ref{IntEqn}) yields an error equation 
$$\begin{array}{rl}
\ds E(t_n) \hspace{-0.1in}&\ds= \int_0^{t_n}K_s(t_n,s)(u(s)-U(s))ds+\int_0^{t_n}\frac{(f(u(s),s)-f(U(s),s))ds}{\Gamma(\alpha(t_n))(t_n-s)^{1-\alpha(t_n)}}\\[0.15in]
&\ds=\int_0^{t_n}K_s(t_n,s)(\hat U(s)- U(s))ds+\int_0^{t_n}\frac{(f(\hat U(s),s)-f(U(s),s))ds}{\Gamma(\alpha(t_n))(t_n-s)^{1-\alpha(t_n)}}\\[0.15in]
&\ds~~+\int_0^{t_n}K_s(t_n,s)(u(s)-\hat U(s))ds+\int_0^{t_n}\frac{(f(u(s),s)-f(\hat U(s),s))ds}{\Gamma(\alpha(t_n))(t_n-s)^{1-\alpha(t_n)}}.
\end{array} $$
For case (I), we invoke (\ref{WW:e1}) with $\varepsilon=\varepsilon_0$ ($\varepsilon_0$ is given in (\ref{rn0})) and 
$$\frac{1}{(t_n-s)^{1-\alpha(t_n)}}=\frac{(t_n-s)^{\varepsilon_0-(1-\alpha(t_n))}}{(t_n-s)^{\varepsilon_0}}\leq \frac{Q}{(t_n-s)^{\varepsilon_0}} $$
in the error equation to obtain
$$|E(t_n) |\leq Q
\int_{0}^{t_n}\frac{|E(s)|ds}{(t_n-s)^{\varepsilon_0}} + Q|\mathcal R_n| .$$
For cases (II) and (III), $\alpha(0)<1$ implies that $\alpha(t)$ is bounded away from $1$ by some positive constant $\alpha^*<1$. Therefore, we take $\varepsilon=1-\alpha^*$ in (\ref{WW:e1}) to obtain
$$|K_s(t,s)|\leq \frac{Q}{(t-s)^{1-\alpha^*}}=\frac{Q(t-s)^{\alpha^*-\alpha(t)}}{(t-s)^{1-\alpha(t)}}\leq \frac{Q\max\{1,T\}}{(t-s)^{1-\alpha(t)}}. $$
Thus we get from the error equation that
$$|E(t_n) |\leq Q
\int_{0}^{t_n}\frac{|E(s)|ds}{(t_n-s)^{1-\alpha(t)}} + Q|\mathcal R_n| \leq Q
\int_{0}^{t_n}\frac{|E(s)|ds}{(t_n-s)^{1-\alpha_*}} + Q|\mathcal R_n| .$$

The rest of the proof could be performed following that of \cite[Theorem 5.1]{ZheSINUM} by using the Gronwall inequality in Lemma \ref{thm:DGron}, and is thus omitted.
\end{proof}

\section{Numerical experiments}
We numerically substantiate the mathematical and numerical analysis in previous sections. To ensure the accuracy, the Legendre-Gauss-Lobatto numerical quadrature formula (see e.g., \cite[\S 3.3.2]{ShenTang}) with 80 nodes on each subinterval $[t_{i-1},t_i]$ was used to compute the temporal discretization coefficients in (\ref{ApprV}).

\subsection{Behavior of the solutions}
We numerically investigate the regularity of the solutions to the variable-order fractional Cauchy problem (\ref{ModelR}) and its dependence on $\alpha(0)$. We set $[0,T] = [0,1]$, $u_0=1$, $f=1$, and the variable order $\alpha(t)$ is given
\begin{equation}\label{alpha1}
\alpha(t) = \alpha(1) + (\alpha(0) - \alpha(1))\Big((1-t) - \frac{\sin(2\pi (1-t))}{2\pi} \Big).
\end{equation}
 We present the curves of this variable fractional order and the numerical solutions $U(t)$ to model (\ref{ModelR}) in the left and right plots of Figure \ref{figure:2}, respectively, for the following three cases:
\begin{equation}\label{alpha2}\begin{array}{c}
\mathrm{(i)} ~\alpha(0)=1.0, ~\alpha(1)=0.1; \quad \mathrm{(ii)} ~\alpha(0)=0.6, ~\alpha(1)=0.1; \\[0.1in]
\quad \mathrm{(iii)} ~\alpha(0)=0.3, ~\alpha(1)=0.1.
\end{array}\end{equation}
In the numerical simulations, we choose $N=1440$. We observe that the numerical solution of case (i) is smooth near the initial time $t=0$, while those for cases (ii) and (iii) exhibit singularities near $t=0$ and the singularity gets stronger as $\alpha(0)$ decreases. This coincides with the mathematical analysis in Theorems \ref{thm:u}--\ref{thm:u''}.

\begin{figure}[H]
	\setlength{\abovecaptionskip}{0pt}
	\centering
\includegraphics[width=2.25in,height=2in]{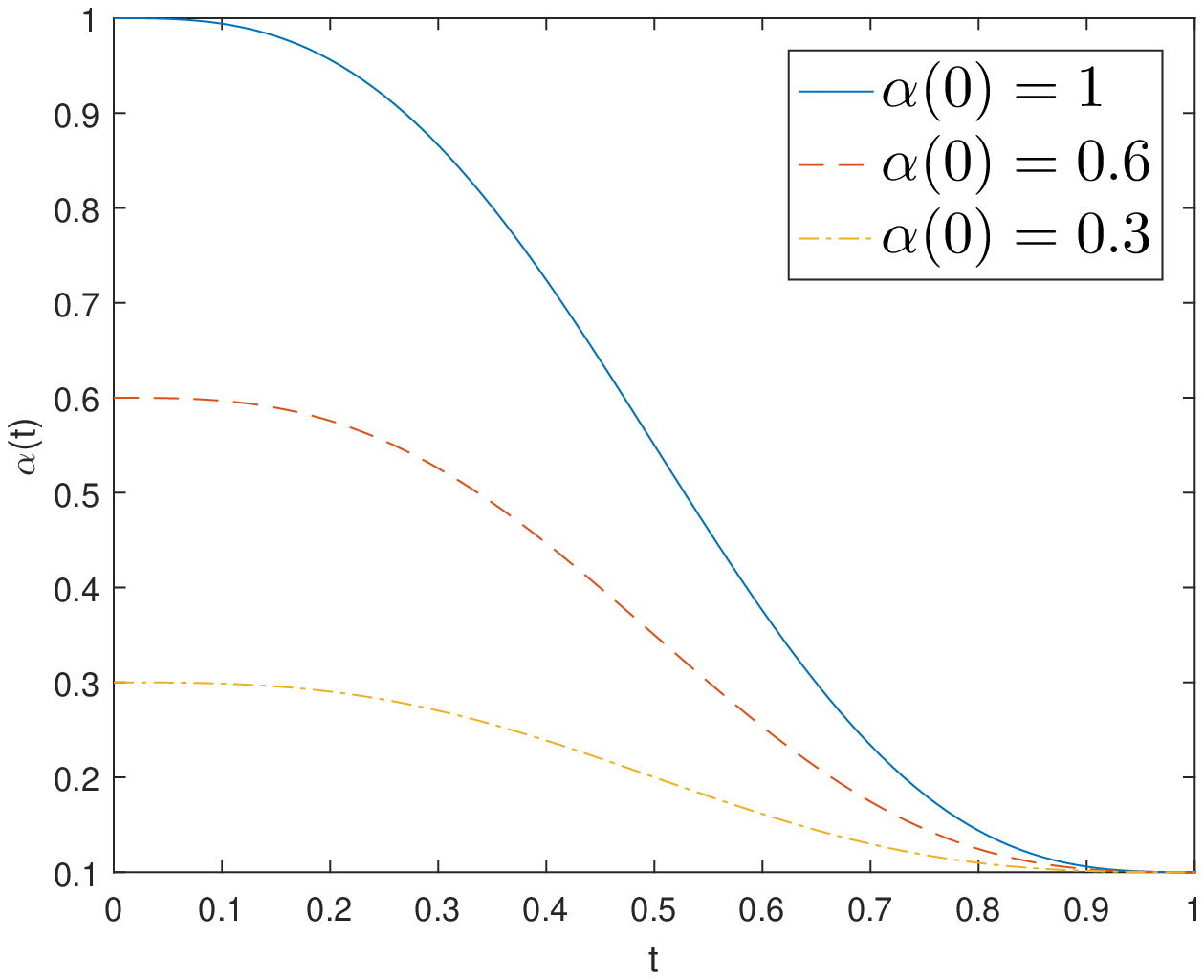}	\includegraphics[width=2.25in,height=2in]{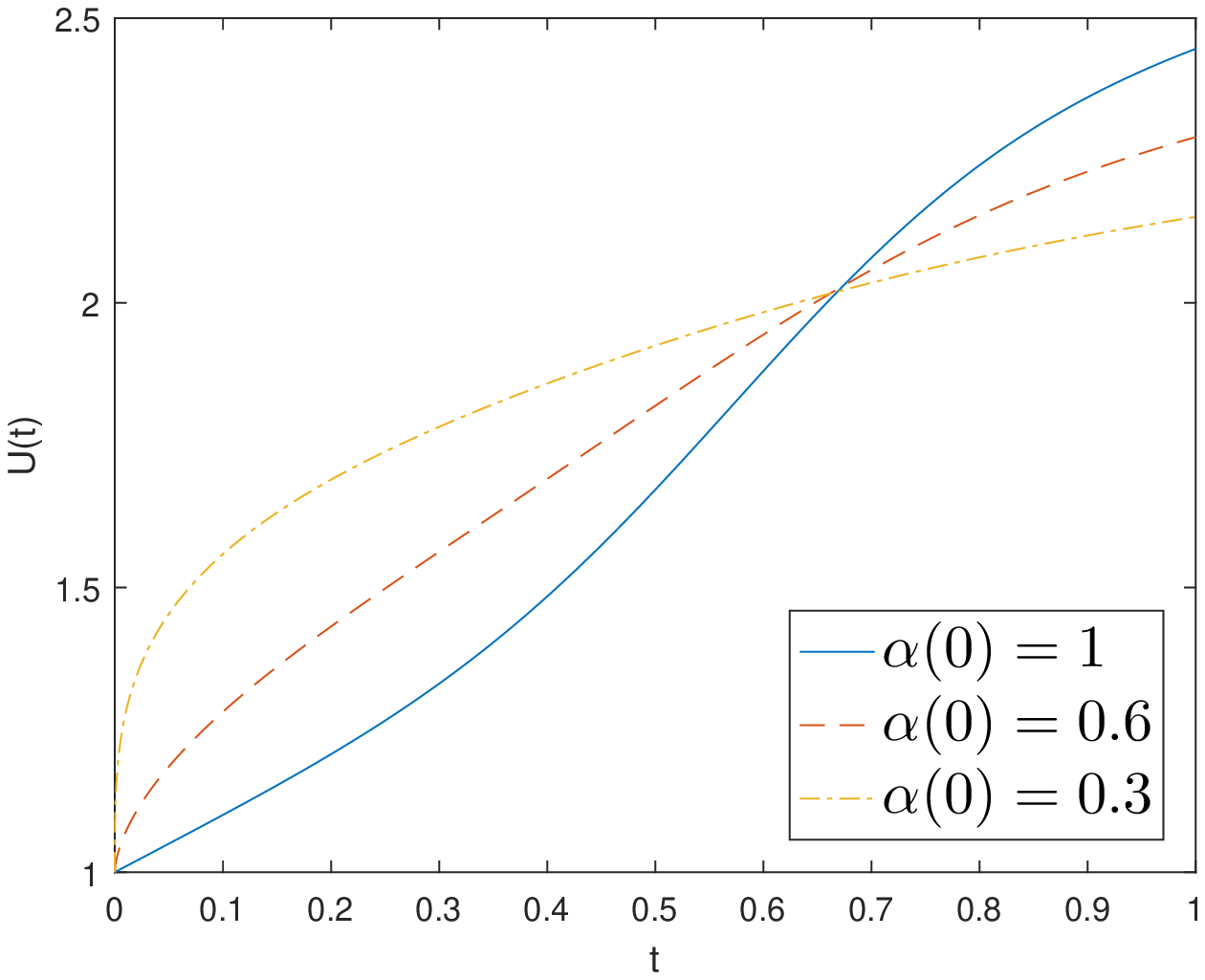}
	\caption{Plots of variable orders (left) and the numerical solutions $U(t)$ to (\ref{ModelR}) (right) for (i)--(iii).}
	\label{figure:2}
\end{figure}

\subsection{Convergence rates}
We investigate the convergence behavior of the numerical approximations to the variable-order fractional differential equation (\ref{ModelR}). Let $[0,T] = [0,1]$, $u_0=1$, $f(u)=0.5\times \sin^4(u)$ and $\alpha(t)$ be given by (\ref{alpha1}). As the exact solutions are not available, we use the numerical solutions $U_*$ discretized with $N=1440$ and either a uniform mesh or a graded mesh
of $r=1/\alpha(0)$ as the reference solutions. We measure the convergence rate $\kappa$ of the numerical approximations by
\begin{equation}\label{error}
\big \| U - U_* \big \|_{\hat L^\infty} \leq QN^{-\kappa}. 
\end{equation}
 The Newton iterative method (see e.g., \S 7.1.1 in \cite{Qua}) with the tolerance of $1\times 10^{-10}$ is used to treat the nonlinearity and we observe from Tables \ref{table0:0}--\ref{table0:1} that in the case of $\alpha(0) = 1$ a second-order convergence rate was reached under the uniform partition. However, if $\alpha(0)<1$, a uniform temporal partition leads only to a sub-optimal convergence rate $2\alpha(0)$. The second-order accuracy could be recovered by using the graded partition of $r = 1/\alpha(0)$. All these observations substantiate the theoretical analysis in Theorem \ref{thm:v}. 

\begin{table}[H]
	\setlength{\abovecaptionskip}{0pt}
	\centering
	\caption{Convergence rates under the uniform mesh with different $(\alpha(0),\alpha(1))$.}
	\vspace{0.5em}	
		\begin{tabular}{ccccccc}
			\cline{1-7}
			$1/N$&$(1,0.8)$ & $\kappa$ &$(0.6,0.4)$ &$\kappa$&$(0.4,0.2)$ &$\kappa$ \\
			\cline{1-7}		
			1/48&	1.11E-05&		&1.98E-04	&	&       1.41E-03&	\\
			1/72&	5.04E-06&	1.96&	1.20E-04&	1.22&	9.89E-04&	0.87\\
			1/96&	2.88E-06&	1.94&	8.46E-05&	1.23&	7.70E-04&	0.87\\
			1/120&	1.87E-06&	1.93&	6.42E-05&	1.24&	6.33E-04&	0.88\\
			\hline
		\end{tabular}
		\label{table0:0}
	\end{table}

\begin{table}[H]
	\setlength{\abovecaptionskip}{0pt}
	\centering
	\caption{Convergence rates under the graded mesh of $r=1/\alpha(0)$ with different $(\alpha(0),\alpha(1))$.}
	\vspace{0.5em}	
		\begin{tabular}{ccccc}
			\cline{1-5}
			$1/N$&$(0.6,0.4)$ &$\kappa$&$(0.4,0.2)$ &$\kappa$ \\
			\cline{1-5}		
		1/48&	1.05E-05&		&1.93E-05	&\\
		1/72&	4.61E-06&	2.03&	8.65E-06&	1.98\\
		1/96&	2.55E-06&	2.06&	4.88E-06&	1.99\\
		1/120&	1.61E-06&	2.05&	3.12E-06&	2.00\\
			\hline
		\end{tabular}
		\label{table0:1}
	\end{table}

\section*{Acknowledgements}

This work was partially funded by the International Postdoctoral Exchange
Fellowship Program (Talent-Introduction Program) YJ20210019 and by the
China Postdoctoral Science Foundation 2021TQ0017.


\end{document}